\theoremstyle{plain}
\newtheorem{theorem}{Theorem}
\numberwithin{theorem}{section}
\newtheorem*{theorem*}{Theorem}
\newtheorem{definition}[theorem]{Definition}
\newtheorem*{definition*}{Definition}
\newtheorem*{Proposition*}{Proposition}
\newtheorem*{criterion*}{Criterion}
\newtheorem{remark}[theorem]{Remark}
\newtheorem*{example*}{Example}
\newtheorem*{remark*}{Remark}
\newtheorem*{lemma*}{Lemma}
\newcommand{\Z}{\mathbb{Z}}
\newcommand{\C}{\mathbb{C}}
\newcommand{\R}{\mathbb{R}}
\newcommand{\calf}{\mathcal{F}}
\newcommand{\calo}{\mathcal{O}}
\newcommand{\changefont}{%
    \fontsize{10}{11}\selectfont
}
\title{Cartan's method and its applications in sheaf cohomology}
\author{Yuan Liu}
\date{}
  \changefont \textit{Date}: \today
\begin{document}

\maketitle
\thispagestyle{specialfooter}
\begin{abstract}
This paper aims to use Cartan's original method in proving Theorem A and B on closed cubes to provide a different proof of the vanishing of sheaf cohomology over a closed cube if either (i) the degree exceeds its real dimension or (ii) the sheaf is (locally) constant and the degree is positive. In the first case, we can further use Godement's argument to show the topological dimension of a paracompact topological manifold is less than or equal to its real dimension. 
\end{abstract}

\section{Introduction}
Going back to the ``Séminaire Henri Cartan 1951--1952" (\cite{cartan}), H. Cartan proved Theorem A and B for coherent sheaves on Stein manifolds, for which he started with proving Theorem A and B for the sheaf of holomorphic functions on closed cubes in $\C^n$. Now we want to show that Cartan's original method works equally well as the standard arguments using C\v{e}ch cohomology in proving some well-known results on the vanishing of sheaf cohomology on closed cubes.
\\
\\
This partially expository paper is organized as follows. In section \ref{cartan's_method}, we go over Cartan's method in proving theorem B for the sheaf of holomorphic functions on closed cubes. In section \ref{application_of_cartan_method}, we provide the applications of Cartan's method in the vanishing of sheaf cohomology on closed cubes. The first result indicates that the cohomology vanishes for a sheaf of abelian groups on a closed cube if the degree exceeds the real dimension of this cube. Moreover, we can use Godement's argument to show that a paracompact topological manifold of real dimension $n$ has its topological dimension less than or equal to $n$. The second result shows that the cohomology of a constant sheaf on closed cubes vanishes if the degree is positive.\\
\\
\textbf{Acknowledge:} the author would like to express his sincere gratitude to Professor Mohan Ramachandran for his inspiration and guidance.
\section{Cartan's method}\label{cartan's_method}
In this section, we elaborate on the idea used by H. Cartan in the proof of Theorem B for the sheaf of holomorphic functions on closed cubes. For essential knowledge of sheaf cohomology, we refer the readers to the monograph of Godement \cite{Godement}.\\
\\
We first define closed cubes.
\begin{definition}
A closed cube of real dimension $n$ is a closed subset of $\R^n$ which can be written as
$$I^n=\prod\limits_{i=1}^n [a_i,b_i]=\{x\in \R^n:\ a_i\leqslant x_i\leqslant b_i, \text{\ where\ } 1\leqslant i\leqslant n\}$$
Here $a_i<b_i$ are finite real numbers\footnote{Notice that the interior is always assumed to be nonempty for these closed cubes.} for any $1\leqslant i\leqslant n$.
\end{definition}
For a closed cube of real dimension $r$ ($0\leqslant r\leqslant 2n$) in $\C^n$, we simply mean a closed cube $I^{r}=\prod\limits_{i=1}^{r}[a_i,b_i]$ with the remaining $2n-r$ coordinates fixed. The complex coordinate is $\{z_i=x_{2i-1}+\sqrt{-1}x_{2i} \text{,\ where\ } 1\leqslant i\leqslant n\}$.\\
\\
Now we are ready to state Cartan's theorem B for the sheaf of holomorphic functions on closed cubes in $\C^n$ and to provide his original proof.
\begin{theorem}\label{cartan_b_cube}
Suppose $X$ is a closed cube of real dimension $r$ in $\C^n$, i.e. $X=I^{r}=\prod\limits_{i=1}^{r}[a_i,b_i]$ with $0\leqslant r\leqslant 2n$, and $\calo$ is the sheaf of holomorphic functions on $\C^n$, then 
\begin{equation}\label{cartan_b_o}
H^p(X,\calo_{\C^n}\big|_{X})=0
\end{equation}
for $p>0$.
\end{theorem}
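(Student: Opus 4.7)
The plan is to implement Cartan's bisection method, using induction on the real dimension $r$. The base case $r=0$ is trivial since any sheaf on a point has vanishing higher cohomology. For the inductive step I assume the vanishing holds for all closed cubes of real dimension strictly less than $r$, fix an $r$-cube $X$, and aim to show $H^p(X,\calo|_X)=0$ for all $p>0$.

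The key ingredient is a gluing lemma via Mayer--Vietoris applied to a bisection of $X$. Cutting $X$ along the midpoint of one of its coordinate directions decomposes it as $X=X^{(1)}\cup X^{(2)}$, where $X^{(1)},X^{(2)}$ are closed $r$-cubes and $X_0:=X^{(1)}\cap X^{(2)}$ is a closed cube of real dimension $r-1$ in $\C^n$. The closed-cover Mayer--Vietoris sequence then reads
\[
\cdots\to H^{p-1}(X_0,\calo)\to H^p(X,\calo)\to H^p(X^{(1)},\calo)\oplus H^p(X^{(2)},\calo)\to H^p(X_0,\calo)\to\cdots.
\]
By the inductive hypothesis $H^q(X_0,\calo)=0$ for $q>0$, so vanishing on $X$ reduces to vanishing on $X^{(1)},X^{(2)}$ together with, for $p=1$, surjectivity of the restriction $\calo(X^{(1)})\oplus\calo(X^{(2)})\to\calo(X_0)$, i.e.\ an additive Cousin problem on the wall $X_0$.

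Since $X^{(1)}$ and $X^{(2)}$ are still $r$-cubes, I would iterate the bisection and then invoke compactness. After finitely many halvings, $X$ is partitioned into arbitrarily small closed $r$-sub-cubes, and for any sufficiently small such sub-cube $X'$ the vanishing $H^p(X',\calo|_{X'})=0$ for $p>0$ follows from a local holomorphic argument: $X'$ sits inside an open polydisc on which the $\bar\partial$-Poincaré lemma (equivalently, Dolbeault's theorem) gives $H^p(\cdot,\calo)=0$, and passing to the direct limit over shrinking open polydisc neighborhoods of $X'$ transfers this vanishing to the closed sub-cube. Reassembling via repeated applications of the Mayer--Vietoris gluing then yields vanishing on all of $X$.

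The main obstacle is the additive splitting $f=f^{(1)}-f^{(2)}$ on the bisecting wall $X_0$, because the closed-cover Mayer--Vietoris cannot sidestep this Cousin problem. This is precisely where Cartan's original technique is expected to enter: one splits $f$ by an explicit holomorphic construction (a Cauchy-type integral along the bisecting hyperplane, exploiting the product structure of $X_0$ as a cube in lower-dimensional complex slabs, together with a Runge-style approximation to control convergence). Everything else is inductive bookkeeping or a standard consequence of the $\bar\partial$-Poincaré lemma, so the Cousin splitting on $X_0$ is the genuinely hard step.
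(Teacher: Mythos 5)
Your inductive skeleton---induction on $r$, a closed-cover Mayer--Vietoris sequence across an $(r-1)$-dimensional wall, with an additive Cousin splitting on the wall as the only nontrivial input at $p=1$---is sound in outline and runs parallel to the paper's argument, which achieves the same slab-by-slab gluing directly inside Godement's flabby resolution rather than through Mayer--Vietoris. But there are two genuine gaps. First, your ``local holomorphic argument'' fails as stated: open polydiscs containing a closed cube $X'$ are \emph{not} cofinal among its open neighborhoods (any disc containing a rectangle has radius at least half its diagonal, so such polydiscs cannot shrink into a thin neighborhood of $X'$), and the direct-limit formula $H^p(X',\calo)=\varinjlim_{U\supset X'}H^p(U,\calo)$ runs over \emph{all} neighborhoods; vanishing on a non-cofinal subfamily says nothing about the limit. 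The repair is to use open boxes (products of open planar rectangles), which \emph{are} cofinal by compactness and are biholomorphic to polydiscs, so the $\bar\partial$-Poincar\'e lemma still applies. But once repaired, that observation applies verbatim to $X$ itself---$X$ is already a closed cube with a cofinal system of box neighborhoods---so the entire bisection apparatus becomes superfluous and you have given the standard Dolbeault proof rather than Cartan's; in particular the proof then rests on the several-variable $\bar\partial$-Poincar\'e lemma, which the paper's method deliberately avoids (for $p\geqslant 2$ the paper uses no holomorphicity at all, only formal gluing in the flabby resolution, which is precisely why the same argument later yields Theorem 2.3 for arbitrary abelian sheaves).

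Second, the step you yourself flag as ``the genuinely hard step''---writing $f=f^{(1)}-f^{(2)}$ on a neighborhood of the wall $X_0$ with $f^{(i)}$ holomorphic near $X^{(i)}$---is never carried out, and it is indispensable for $p=1$ if you keep the gluing structure. The paper's execution is short and needs no Runge-style approximation: take a closed rectangle $B$ in the $z_1$-plane containing the wall's first-coordinate slice in its interior and contained in the domain of $f$, write Cauchy's formula $f=\frac{1}{2\pi\sqrt{-1}}\int_{\partial B}\frac{f(\zeta_1,z_2,\dots,z_n)}{\zeta_1-z_1}\,d\zeta_1$, and split $\partial B=\gamma_1\cup\gamma_2$ into the arcs to the left and right of the wall; the $\gamma_1$-integral is holomorphic on the right-hand slab and the $\gamma_2$-integral on the left-hand slab, with joint holomorphy in the remaining variables supplied by Hartogs' theorem on separate holomorphicity. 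Runge approximation only enters the non-compact (open cube or Stein exhaustion) version of the theorem. As written, then, neither the base vanishing on the subcubes nor the $p=1$ gluing is actually established.
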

\begin{proof}
The proof uses induction on the real dimension $r$ of the closed cube.\\
Applying Godement's canonical flabby resolution (c.f. section 4.3, \cite{Godement}) to $\calo$, we get the long exact sequence
$$0\to\calo\to \mathcal{C}^0(\calo)\to\cdots\to\mathcal{C}^{p-1}(\calo)\to \mathcal{C}^{p}(\calo) \to \mathcal{C}^{p+1}(\calo)\to\cdots$$
where $\mathcal{C}^i(\calo)$'s, $i\geqslant 0$, are all canonical flabby sheaves. We also have the corresponding global sections over $I^r=\prod\limits_{i=1}^{r}[a_i,b_i]$:
\begin{equation}
\begin{split}
0&\to\calo(I^r)\xrightarrow{\imath} \mathcal{C}^0(\calo)(I^r)\xrightarrow{d^0} \mathcal{C}^{1}(\calo)(I^r)\to\cdots\\
\cdots&\to\mathcal{C}^{p-1}(\calo)(I^r)\xrightarrow{d^{p-1}} \mathcal{C}^{p}(\calo)(I^r) \xrightarrow{d^p} \mathcal{C}^{p+1}(\calo)(I^r)\to\cdots
\end{split}
\end{equation}
For $r=0$, it is trivially true because     $\mathcal{C}^i(\calo)=\calo$ and $d^i=\text{identity}$ for all $i$, we have
$H^p(I^0,\calo)=H^p\big(\mathcal{C}^{\bullet}(\calo)(I^0)\big)=0$ for $p>0$.\\
Now we assume it is true for closed cubes with real dimension $r-1 (1\leqslant r\leqslant 2n)$ and we want to prove it for $r$.\\
Define $\pi=\pi_1$ as the natural projection onto the first coordinate:
\begin{equation*}
I^{r}=[a_1,b_1]\times\cdots\times[a_r,b_r]\xrightarrow{\pi} I_1=[a_{1},b_{1}].
\end{equation*}
By induction hypothesis, for any $t\in [a_{1},b_{1}]$, the fibre $\pi^{-1}(t)$ satisfies
$$H^p\big(\pi^{-1}(t),\calo\big|_{\pi^{-1}(t)}\big)=0.$$
\textbf{Case 1:} if $p\geqslant 2$.\\
For any element of $H^p(X,\calo\big|_{I^r})$, take its representation $\eta\in\mathcal{C}^p(\calo)(I^r)$ with $d^p\eta=0$, then 
$$\eta\big|_{\pi^{-1}(t)}=d^{p-1}\alpha_t$$
for some $\alpha_t\in\mathcal{C}^{p-1}\big(\pi^{-1}(t),\calo\big|_{\pi^{-1}(t)}\big)$.\\
By Th\'eor\`eme 4.11.1 (Page 193) of  \cite{Godement}, we have $$H^p(\pi^{-1}(t),\calo\big|_{\pi^{-1}(t)})=\varinjlim\limits_{\substack{\pi^{-1}(t)\subseteq U\\U \text{\ is open in\ }\R^r}}H^p(U,\calo\big|_U)$$
Hence there is a neighborhood $U_t\subset \R^r$ of $\pi^{-1}(t)$ such that
$$\eta\big|_{U_t}=d^{p-1}\alpha_t\big|_{U_t}$$
for some $ \alpha_t\in\mathcal{C}^{p-1}(\calo)(U_t)$.\\
By the flabbiness of $\mathcal{C}^{p-1}(\calo)$, we can assume that $\alpha_t\in\mathcal{C}^{p-1}(\calo)(I^r)$.\\
Now for such $U_t$'s covering $I_1=[a_1,b_1]$, we can find a finite sub-covering. Each $U_t$ contains a closed cube of the form $\pi^{-1}([t-\varepsilon,t+\varepsilon])$ for some $\varepsilon>0$, and we replace $U_t$ by such closed cubes.
\tikzset{
  on each segment/.style={
    decorate,
    decoration={
      show path construction,
      moveto code={},
      lineto code={
        \path [#1]
        (\tikzinputsegmentfirst) -- (\tikzinputsegmentlast);
      },
      curveto code={
        \path [#1] (\tikzinputsegmentfirst)
        .. controls
        (\tikzinputsegmentsupporta) and (\tikzinputsegmentsupportb)
        ..
        (\tikzinputsegmentlast);
      },
      closepath code={
        \path [#1]
        (\tikzinputsegmentfirst) -- (\tikzinputsegmentlast);
      },
    },
  },
  mid arrow/.style={postaction={decorate,decoration={
        markings,
        mark=at position .5 with {\arrow[#1]{stealth}}
      }}},
}
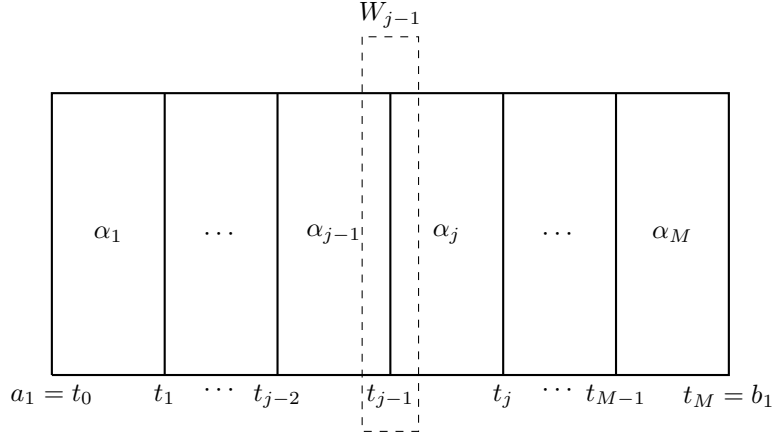
\begin{figure}[htp]
\centering
\begin{tikzpicture}[scale=0.75]{saf}
\draw[thick] (-6,-2.5)--(-6,2.5)--(6,2.5)--(6,-2.5)--(-6,-2.5);
\draw[thick] (0,-2.5)--(0,2.5);
\draw[thick] (-2,-2.5)--(-2,2.5);
\draw[thick] (2,-2.5)--(2,2.5);
\draw[thick] (4,-2.5)--(4,2.5);
\draw[thick] (-4,-2.5)--(-4,2.5);
\node[below] at (-4,-2.5) {$t_{1}$};
\node[below] at (-3,-2.5) {$\cdots$};
\node[below] at (-2,-2.5) {$t_{j-2}$};
\node[below] at (2,-2.5) {$t_{j}$};
\node[below] at (4,-2.5) {$t_{M-1}$};
\node[below] at (3,-2.5) {$\cdots$};
\node[below] at (0,-2.5) {$t_{j-1}$};
\node[below] at (6,-2.5) {$t_{M}=b_1$};
\node[below] at (-6,-2.5) {$a_1=t_0$};
\node at (-1,0) {$\alpha_{j-1}$};
\node at (1,0) {$\alpha_{j}$};
\node at (-5,0) {$\alpha_{1}$};
\node at (5,0) {$\alpha_{M}$};
\node at (-3,0) {$\cdots$};
\node at (3,0) {$\cdots$};
\draw[dashed] (-0.5,3.5)--(-0.5,-3.5)--(0.5,-3.5)--(0.5,3.5)--(-0.5,3.5);
\node[above] at (0,3.5) {$W_{j-1}$};
\end{tikzpicture}
\label{cartan_picture}
\caption{Cartan's method}
\end{figure}
\\
Now let $a_1=t_0<t_1<\cdots<t_j<\cdots<t_{M-1}<t_M=b_1$ be a partition such that we have $\alpha_j\in \mathcal{C}^{p-1}(\calo)\big(\pi^{-1}([t_{j-1},t_j])\big)$ with
$$\eta\big|_{\pi^{-1}([t_{j-1},t_j])}=d^{p-1}\Big(\alpha_j\big|_{\pi^{-1}([t_{j-1},t_j])}\Big)$$
for each $1\leqslant j\leqslant M$.\\
On $\pi^{-1}(t_{j-1})$, we have
$$d^{p-1}\Big((\alpha_j-\alpha_{j-1})\big|_{\pi^{-1}(t_{j-1})}\Big)=\eta\big|_{\pi^{-1}(t_{j-1})}-\eta\big|_{\pi^{-1}(t_{j-1})}=0.$$
Therefore 
$$\alpha_j-\alpha_{j-1}=d^{p-2}\beta_{j-1}$$
for some $\beta_{j-1}\in\mathcal{C}^{p-2}(\calo)(W_{j-1})$ in a neighborhood $W_{j-1}\subset \R^r$ of $\pi^{-1}(t_{j-1})$ due to the induction hypothesis and $p-1 > 0$. By flabbiness of $\mathcal{C}^{p-2}(\calo)$ (since $p-2\geqslant 0$), we can assume that $\beta_{j-1}\in \mathcal{C}^{p-2}(\calo)(I^r)$ .\\
Define
$$\alpha'=\begin{cases} 
      \alpha_{j-1}+d^{p-2}\beta_{j-1} & \text{\ on\ }\pi^{-1}([t_{j-2},t_{j-1}]) \\
      \alpha_{i} & \text{\ on\ }\pi^{-1}([t_{j-1},t_j])
   \end{cases}$$
then we get $\alpha'\in\mathcal{C}^{p-1}(\calo)\big(\pi^{-1}([t_{j-2},t_j])\big)$ such that $d^{p-1}\alpha'=\eta\big|_{\pi^{-1}([t_{j-2},t_j])}.$
Follow this process and modify for $M-1$ times from left to right, we get $\alpha\in \mathcal{C}^{p-1}(\calo)(I^r)$ with $d^{p-1}\alpha=\eta$ on $I^r$.\\
This proves that $H^p(I^r,\calo_{\C^n}\big|_{I^r})=0$ for $p>1$.\\
\textbf{Case 2:} if $p=1$. \\
The first several terms of canonical flabby resolution is:
$$0\to \calo(I^r)\xrightarrow{\imath}\mathcal{C}^0(\calo)(I^r)\xrightarrow{d^0}\mathcal{C}^1(\calo)(I^r)\xrightarrow{d^1}\mathcal{C}^2(\calo)(I^r)\to\cdots$$
For any element in $H^1(I^r,\calo\big|_{I^r})$, take its representation $\eta\in\calf^1(I^r)$ with $d^1\eta=0$. By the same argument as in case 1, we can find a partition of $[a_1,b_1]$, say $a_1=t_0<t_1<\cdots<t_j<\cdots<t_{M-1}<t_{M}=b_1$, such that
$$\eta\big|_{\pi^{-1}([t_{j-1},t_j])}=d^0\alpha_{j}$$
for some $\alpha_{j}\in \mathcal{C}^0(\calo)(\pi^{-1}([t_{j-1},t_j]))$. By flabbiness of $\mathcal{C}^0(\calo)$, we can assume that $\alpha_{j}\in\mathcal{C}^0(\calo)(I^r)$ .\\
Then we get
$$\eta\big|_{\pi^{-1}([t_{j-1},t_j])}=d^0\alpha_{j}\big|_{\pi^{-1}([t_{j-1},t_j])}.$$
On $\pi^{-1}(t_{j-1})$, we have
$$d^0\big(\alpha_{j}\big|_{\pi^{-1}(t_{j-1})}-\alpha_{j-1}\big|_{\pi^{-1}(t_{j-1})}\big)=\eta\big|_{\pi^{-1}(t_{j-1})}-\eta\big|_{\pi^{-1}(t_{j-1})}=0$$
thus $\alpha_{j}-\alpha_{j-1}\in\calo\big(\pi^{-1}(t_{j-1})\big)$ is defined on a closed neighborhood $W_{j-1}$ in $\R^r$ of $\pi^{-1}(t_{j-1})$ (here we identify $\calo $ with its image by $\imath$).\\
Now write the holomorphic function as $f=\alpha_{j}-\alpha_{j-1}$. Because $\pi^{-1}(t_{j-1})$ is compact, we can assume that $f$ is defined on the closed neighborhood $W_{j-1}=[a,b]\times\Big(\prod\limits_{i=2}^{r}[a_i-\varepsilon,b_i+\varepsilon]\Big)$\footnote{ When $r=1$, we take $W_{j-1}$ as a uniform small neighborhood of the form $[a,b]\times[-\varepsilon,\varepsilon]$ near each $t_{j-1}$, so are the $B$ and $C_j$'s defined below.}, where $\varepsilon>0$ is a small and $t_{j-2}<a<t_{j-1}<b<t_{j}$.\\
We need the following notations.
\begin{itemize}
\item $B=[a,b]\times[a_{2}-\varepsilon,b_{2}+\varepsilon]$ is a rectangle in $z_1$-plane where the function $f$ is defined and is holomorphic.
\item Let $B_{j-1}=[t_{j-2},t_{j-1}]$ and $B_{j}=[t_{j-1},t_{j}]$. Also, denote $C_{j-1}=[t_{j-2},t_{j-1}]\times[a_{2},b_{2}]$ and $C_{j}=[t_{j-1},t_{j}]\times[a_{2},b_{2}]$ to be two closed rectangles in $z_1$-plane on the left and on the right respectively.
\item Write the boundary of $B$ as the union of $\gamma_1=\partial B\cap \{x_{1}<t_{j-1}\}$ and $\gamma_2=\partial B\cap \{x_{1}>t_{j-1}\}$. 
\end{itemize}
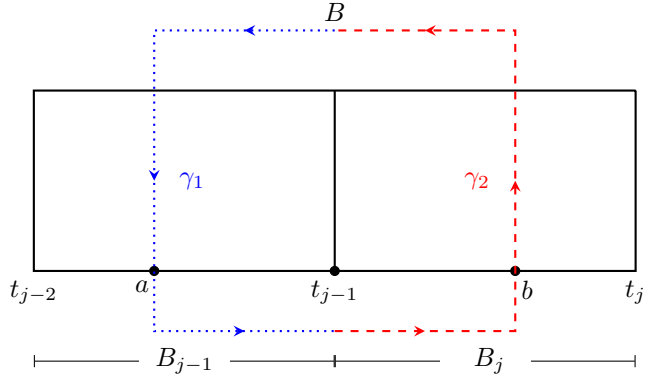
\begin{figure}[htp]
\centering
\begin{tikzpicture}[scale=0.8]
\draw[thick] (5,3)--(-5,3)--(-5,0)--(5,0)--(5,3);
\draw[thick] (0,0)--(0,3);
\node[left,below] at (-3.2,0) {$a$};
\node[below] at (0,0) {$t_{j-1}$};
\node[below] at (5,0) {$t_{j}$};
\node[below] at (-5,0) {$t_{j-2}$};
\node[below] at (3.2,0) {$b$};
\draw[fill] (0,0) circle (0.5ex);
\draw[fill] (3,0) circle (0.5ex);
\draw[fill] (-3,0) circle (0.5ex);
\path [draw=blue,thick,dotted, postaction={on each segment={mid arrow=blue}}]
  (0,4)--(-3,4)--(-3,-1)--(0,-1);
\path [draw=red,thick,dashed, postaction={on each segment={mid arrow=red}}]
  (0,-1)--(3,-1)--(3,4)--(0,4);
\draw[|-|] ($(-5,-1.5)$) -- node[fill=white,inner sep=1mm,text width=1cm,midway] {$\ B_{j-1}$}($(0,-1.5)$); \draw[|-|] ($(5,-1.5)$) -- node[fill=white,inner sep=1mm,text width=1cm,midway] {$\quad B_j$}($(0,-1.5)$);
\node[above] at (0,4) {$B$};
\node[left,blue] at (-2,1.5) {$\gamma_1$};
\node[right,red] at (2,1.5) {$\gamma_2$};
\end{tikzpicture}
\caption{Picture in $z_1$-plane} \label{rectangular}
\end{figure}
By Cauchy's integral formula, we have 
\begin{align*}
f(z_1,\cdots,z_n)=&\frac{1}{2\pi\sqrt{-1}}\int_{\partial B}\frac{f(\zeta_1,z_2,\cdots,z_n)}{\zeta_1-z_1}d\zeta_1\\
=&\frac{1}{2\pi\sqrt{-1}}\int_{\gamma_1}\frac{f(\zeta_1,z_2,\cdots,z_n)}{\zeta_1-z_1}d\zeta_1+\frac{1}{2\pi\sqrt{-1}}\int_{\gamma_2}\frac{f(\zeta_1,z_2,\cdots,z_n)}{\zeta_1-z_1}d\zeta_1\\
=&: g_1+g_2
\end{align*}
Since $\gamma_1$ is disjoint from $C_j$, $g_1$ defines a holomorphic function on $C_j$. Similarly, $g_2$ defines a holomorphic function on $C_{j-1}$. Now we know the functions $g_1$ and $g_2$ are holomorphic on $\pi^{-1}([t_{j-1},t_{j}])$ and $\pi^{-1}([t_{j-2},t_{j-1}])$ respectively due to Hartogs's theorem on separate holomorphicity.\\
Let $h_{j-1}=g_2\in\calo(\pi^{-1}([t_{j-2},t_{j-1}]))$ and $h_{j}=-g_1\in\calo(\pi^{-1}([t_{j-1},t_{j}]))$, then
$$\alpha_j\big|_{\pi^{-1}(t_{j-1})}-\alpha_{j-1}\big|_{\pi^{-1}(t_{j-1})}=f=h_{j-1}-h_{j}$$
Thus
$$\big(\alpha_j+h_j\big)\big|_{\pi^{-1}(t_{j-1})}=\big(\alpha_{j-1}+h_{j-1}\big)\big|_{\pi^{-1}(t_{j-1})}.$$
They patch together to get $\alpha'\in\mathcal{C}^0(\calo)\big(\pi^{-1}[t_{j-2},t_{j}]\big)$ with $$d^0\alpha'=\eta\big|_{\pi^{-1}([t_{j-2},t_{j}])}.$$
Continue the above process for finitely many (i.e. $M-1$) times, we get the desired $\alpha\in\mathcal{C}^0(\calo)(I^r)$ with $d^0\alpha=\eta$ on $I^r$. This proves that $H^1(I^r,\calo_{\C^n}\big|_{I^r})=0$.\\
By induction on $r$, we know that $H^p(I^r,\mathcal{O}_{\C^n}\big|_{I^r})=0$ for any $p>0$ and $0\leqslant r\leqslant 2n$.
\end{proof}
By almost the same argument, we can show that
\begin{theorem}
Let $X$ be a closed cube in $\C^n$ and $\Omega^{p}(X)$ be the sheaf of holomorphic $p$-forms on $X$, then
\begin{equation}
H^{q}(X, \Omega^{p}(X))=0
\end{equation}
for any $q>0$.
\end{theorem}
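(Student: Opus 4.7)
The plan is to reduce the statement to Theorem \ref{cartan_b_cube} by exploiting the observation that on $\C^n$ the sheaf $\Omega^p$ of holomorphic $p$-forms is globally free of rank $\binom{n}{p}$ as an $\calo$-module. Indeed, the holomorphic frame $\{dz_1,\ldots,dz_n\}$ is defined globally on $\C^n$, and for each strictly increasing multi-index $I=(i_1<\cdots<i_p)$ in $\{1,\ldots,n\}$ we obtain a global section $dz_I:=dz_{i_1}\wedge\cdots\wedge dz_{i_p}$. Together these $\binom{n}{p}$ sections yield a canonical $\calo$-linear isomorphism
$$\Omega^p \;\cong\; \bigoplus_{|I|=p}\calo\cdot dz_I \;\cong\; \calo^{\oplus\binom{n}{p}}$$
of sheaves on $\C^n$, and in particular of sheaves of abelian groups.

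Restricting to the closed cube $X\subset\C^n$ preserves this direct sum decomposition, and since sheaf cohomology commutes with finite direct sums, we obtain
$$H^q\bigl(X,\Omega^p|_X\bigr) \;\cong\; \bigoplus_{|I|=p} H^q\bigl(X,\calo|_X\bigr),$$
which vanishes for every $q>0$ by Theorem \ref{cartan_b_cube}. In this approach there is really no obstacle: the content of the proof is the bookkeeping fact that the standard holomorphic frame is global on $\C^n$, so the vanishing for $\Omega^p$ follows formally from the vanishing for $\calo$.

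A more self-contained alternative, and presumably what is meant by ``almost the same argument,'' is to mimic Cartan's induction on the real dimension $r$ verbatim. The base case $r=0$ is again trivial, and in the inductive step the projection $\pi$ onto the first coordinate, the partition of $[a_1,b_1]$, the slab-by-slab lifting of a cocycle representative $\eta$, and the flabbiness of the Godement canonical sheaves $\mathcal{C}^i(\Omega^p)$ all go through unchanged. For the $q=1$ case, one applies the Cauchy-integral splitting of the discrepancy $\alpha_j-\alpha_{j-1}$ component by component, regarding the form as a $\binom{n}{p}$-tuple of holomorphic functions and invoking Hartogs's theorem coordinate-wise. The only step requiring a moment's thought — and the closest thing to a genuine obstacle — is verifying that this component-wise Cauchy splitting is compatible with the $\calo$-module structure, but this is automatic once the global basis $\{dz_I\}$ is fixed at the outset.
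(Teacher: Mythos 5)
Your proposal is correct, and your primary route is genuinely different from (and slicker than) the paper's. The paper simply re-runs Cartan's induction: for $q\geqslant 2$ nothing changes, and for $q=1$ the Cauchy-integral splitting of $\alpha_j-\alpha_{j-1}$ is carried out separately on each holomorphic coefficient of the $p$-form --- i.e.\ exactly your ``self-contained alternative.'' Your first argument instead observes that the global frame $\{dz_I\}$ gives an isomorphism $\Omega^p\cong\calo^{\oplus\binom{n}{p}}$ of sheaves of abelian groups on $\C^n$, restricts it to $X$, and uses the fact that sheaf cohomology commutes with finite direct sums to reduce everything formally to Theorem \ref{cartan_b_cube}; this avoids repeating the induction entirely and makes transparent that the coefficient-wise bookkeeping in the paper's proof is really just this direct-sum decomposition carried out inside the inductive step. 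What the formal reduction buys is brevity and a clean separation of the analytic content (already in Theorem \ref{cartan_b_cube}) from the linear-algebraic triviality of $\Omega^p$ over $\C^n$; what the paper's route buys is that it stays entirely within Cartan's gluing method, consistent with the expository aim of the article. Both arguments rely on the same essential fact, namely that the frame $dz_I$ is global, so neither is more general than the other.
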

\begin{proof}
For $q\geqslant2$, this is no essential adjustment needed. For $q=1$, we only need to repeat the corresponding process above for each holomorphic coefficient in the $p$-forms.
\end{proof}
\section{Applications of Cartan's method in sheaf cohomology}\label{application_of_cartan_method}
For Cartan's method above, we do not use any C\v{e}ch cohomology; in fact, we modify local data on common faces of closed cubes and glue them to get a global one. This motivates its application in sheaf cohomology, for the sheaves are compatible local data that can be glued together uniquely. Now we give two applications of Cartan's method in sheaf cohomology.
\subsection{Topological dimension of paracompact topological manifolds}\label{topological_dimension}
The following topological dimension (or cohomology dimension) was first given by Godement (c.f. section 4.13 \cite{Godement}).
\begin{definition}[Topological dimension]\label{dimension}
A nonempty topological space $X$ is said to have topological dimension $\leqslant n$ if 
$$H^p(X,\calf)=0$$
for any $p>n$ and any abelian sheaf $\calf$ over $X$.\\
The topological dimension of X is defined as the least value for which the above is true.
\end{definition}
Given a paracompact topological manifold $X$ of real dimension $n$ in the usual sense, we want to show that its topological dimension is less than or equal to $n$. By an argument of Godement (c.f. Th\'{e}or\`{e}me 4.14.1, \cite{Godement}), we only need to prove this locally in a small neighborhood (for example, closed cubes) of any point. The rest relies on the vanishing of sheaf cohomology of degree $p>n$ on closed cubes, and the standard way to achieve this is to compute its C\v{e}ch cohomology (c.f. section 5.12 of \cite{Godement}). Now, we can use Cartan's method instead.
\begin{theorem}\label{cubes_sheaf}
Suppose $\calf$ is a sheaf of abelian groups in a neighborhood of a closed cube $I^n$, then
\begin{equation}
H^p(I^n,\calf)=0    
\end{equation}
if $p>n$.
\end{theorem}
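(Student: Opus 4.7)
The plan is to imitate Cartan's inductive argument from Theorem \ref{cartan_b_cube}, but now over an arbitrary sheaf of abelian groups and with the crucial simplification that the degree assumption $p > n$ places us permanently in the analogue of Case 1 of that proof. I would induct on $n$. The base case $n = 0$ is immediate, since $I^0$ is a point and $H^p(\{pt\}, \calf) = 0$ for all $p > 0$. So assume the statement holds for closed cubes of real dimension $n-1$, take $p > n$ (hence $p \geqslant 2$), and consider $I^n = \prod_{i=1}^n [a_i,b_i]$ with projection $\pi : I^n \to [a_1,b_1]$ onto the first coordinate, whose fibres $\pi^{-1}(t)$ are $(n-1)$-dimensional closed cubes.

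As in Theorem \ref{cartan_b_cube}, I would apply Godement's canonical flabby resolution $\mathcal{C}^{\bullet}(\calf)$ and represent a class in $H^p(I^n,\calf)$ by a cocycle $\eta \in \mathcal{C}^p(\calf)(I^n)$ with $d^p \eta = 0$. For each $t \in [a_1,b_1]$, the induction hypothesis gives $H^p(\pi^{-1}(t), \calf|_{\pi^{-1}(t)}) = 0$ because $p > n > n-1$; by Godement's Théorème 4.11.1, there is an open neighborhood $U_t \subset \R^n$ of $\pi^{-1}(t)$ and $\alpha_t \in \mathcal{C}^{p-1}(\calf)(U_t)$ with $\eta|_{U_t} = d^{p-1}\alpha_t$. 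By flabbiness I can extend $\alpha_t$ to $I^n$, and by the standard shrinking/compactness argument I get a partition $a_1 = t_0 < t_1 < \cdots < t_M = b_1$ and sections $\alpha_j \in \mathcal{C}^{p-1}(\calf)(I^n)$ with $d^{p-1} \alpha_j = \eta$ on $\pi^{-1}([t_{j-1},t_j])$.

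The patching step is then formally identical to Case 1 of Theorem \ref{cartan_b_cube}: on $\pi^{-1}(t_{j-1})$ the difference $\alpha_j - \alpha_{j-1}$ is a $d^{p-1}$-cocycle, and because $p - 1 \geqslant n > n - 1$ the induction hypothesis again forces the vanishing $H^{p-1}(\pi^{-1}(t_{j-1}), \calf) = 0$, so $\alpha_j - \alpha_{j-1} = d^{p-2} \beta_{j-1}$ on a neighborhood of the fibre. Flabbiness of $\mathcal{C}^{p-2}(\calf)$ (which is legitimate because $p - 2 \geqslant 0$) lets us extend $\beta_{j-1}$ to $I^n$, and modifying $\alpha_{j-1}$ to $\alpha_{j-1} + d^{p-2}\beta_{j-1}$ allows the two pieces to be merged into a single primitive on $\pi^{-1}([t_{j-2},t_j])$. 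Iterating $M-1$ times from left to right produces $\alpha \in \mathcal{C}^{p-1}(\calf)(I^n)$ with $d^{p-1}\alpha = \eta$, so $\eta$ is a coboundary and the cohomology class vanishes.

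The main point worth checking, rather than any genuine obstacle, is the degree bookkeeping: the hypothesis $p > n$ with $n \geqslant 1$ guarantees $p \geqslant 2$ so that the Case 2 complication of Theorem \ref{cartan_b_cube} (where $p=1$ forced the use of Cauchy's integral formula and Hartogs) never arises, and both places where we invoke the induction hypothesis — first to solve $\eta = d^{p-1}\alpha_t$ on fibres, then to solve $\alpha_j - \alpha_{j-1} = d^{p-2}\beta_{j-1}$ — apply because $p$ and $p-1$ both strictly exceed $n-1$. This is exactly what makes the generalization from $\calo$ to an arbitrary abelian sheaf effortless under the degree restriction $p > n$.
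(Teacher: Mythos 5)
Your proposal is correct and follows the same route as the paper: induction on $n$ via the projection onto the first coordinate, local primitives on fibre neighborhoods obtained from Théorème 4.11.1, extension by flabbiness, and the left-to-right patching with $\beta_{j-1}$ correcting successive primitives. Your observation that $p>n\geqslant 1$ forces $p\geqslant 2$, so the Case 2 machinery of Theorem \ref{cartan_b_cube} is never needed, is exactly the point the paper relies on as well.
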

\begin{proof}
We prove this by induction on $n$.\\
If $n=0$, $I^0$ is a point. Now consider Godement's flabby resolution for $\calf$:
$$0\to\calf\to \mathcal{C}^0(\calf)\to\mathcal{C}^1(\calf)\to\cdots$$
where $\mathcal{C}^i(\calf)$'s, $i\geqslant 0$, are all canonical flabby sheaves.\\
Then $H^p(I^0,\calf)=H^p\big(\mathcal{C}^{\bullet}(\calf)(I^0)\big)=0$ for $p>n=0$.\\
Now we assume it is true for $I^{k}$ with $0\leqslant k<n$.\\
Consider the natural projection map $\pi=\pi_1$ onto the first coordinate:
\begin{equation*}
I^n=[a_1,b_1]\times\cdots\times[a_n,b_n]\xrightarrow{\pi} I_1=[a_1,b_1].
\end{equation*}
Then the fibre $\pi^{-1}(t)$, for any $t\in [a_1,b_1]$, satisfies
$$H^p\big(\pi^{-1}(t),\calf\big|_{\pi^{-1}(t)}\big)=0$$
by the induction hypothesis and $p>n-1$.\\
Now consider the flabby resolution:
$$0\to\calf\to \mathcal{C}^0(\calf)\to\cdots\to\mathcal{C}^{p-1}(\calf)\to \mathcal{C}^{p}(\calf) \to \mathcal{C}^{p+1}(\calf)\to\cdots$$
and the corresponding global sections
\begin{equation*}
\begin{split}
0&\to\calf(I^n)\xrightarrow{\imath} \mathcal{C}^0(\calf)(I^n)\xrightarrow{d^0} \mathcal{C}^{1}(\calf)(I^n)\to\cdots\\
\cdots&\to\mathcal{C}^{p-1}(\calf)(I^n)\xrightarrow{d^{p-1}} \mathcal{C}^{p}(\calf)(I^n) \xrightarrow{d^p} \mathcal{C}^{p+1}(\calf)(I^n)\to\cdots.    
\end{split}
\end{equation*}
Take $\eta\in\mathcal{C}^p(\calf)(I^n)$ with $d^p\eta=0$ and $p>n$, then 
$$\eta\big|_{\pi^{-1}(t)}=d^{p-1}\alpha_t$$
for some $\alpha_t\in\mathcal{C}^{p-1}\big(\pi^{-1}(t),\calf\big|_{\pi^{-1}(t)}\big)$.\\
As in $p\geqslant 2$ case of section \ref{cartan's_method}, we can get a partition of $[a_1,b_1]$ by closed intervals, say $a_1=t_0<t_1<\cdots<t_j<\cdots<t_{M-1}<t_M=b_1$ such that
$$\eta\big|_{\pi^{-1}([t_{j-1},t_j])}=d^{p-1}\alpha_{j}$$
for some $\alpha_{j}\in\mathcal{C}^{p-1}(\calf)\big(\pi^{-1}([t_{i-1},t_i])\big)$ and each $1\leqslant j\leqslant M$.\\
On $\pi^{-1}(t_{j-1})$, we have
$$d^{p-1}\Big((\alpha_j-\alpha_{j-1})\big|_{\pi^{-1}(t_{j-1})}\Big)=\eta\big|_{\pi^{-1}(t_{j-1})}-\eta\big|_{\pi^{-1}(t_{j-1})}=0.$$
Therefore 
$$\alpha_j-\alpha_{j-1}=d^{p-2}\beta_{j-1}$$
for some $\beta_{j-1}\in\mathcal{C}^{p-2}(\calf)$ on a neighborhood of $\pi^{-1}(t_{i-1})$ due to the induction hypothesis and $p-1>n-1$.\\
We can assume that $\beta_{j-1}\in \mathcal{C}^{p-2}(\calf)(I_n)$ since $\mathcal{C}^{p-2}(\calf)$ is flabby when $p-2\geqslant 0$.\\
Now we define
$$\alpha'=\begin{cases} 
      \alpha_{j-1}+d^{p-2}\beta_{j-1} & \text{\ on\ }\pi^{-1}([t_{j-2},t_{j-1}]) \\
      \alpha_{j} & \text{\ on\ }\pi^{-1}([t_{j-1},t_j])
   \end{cases}$$
then we get $\alpha'\in\mathcal{C}^{p-1}(\calf)\big(\pi^{-1}([t_{i-2},t_i])\big)$ such that $d^{p-1}\alpha'=\eta\big|_{\pi^{-1}([t_{j-2},t_j])}.$
Follow this process and modify for $M-1$ times, we get $\alpha\in \mathcal{C}^{p-1}(\calf)(I^n)$ with $d^{p-1}\alpha=\eta$ on $I^n$. This proves that $H^p(I^n,\calf)=0$ for $p>n$.\\
By induction, we know that $H^p(I^n,\mathcal{F})=0$ for any $n$ and $p>n$.
\end{proof}
\textbf{Godement's argument and topological dimension}\\
\\
The following results can be found in the book of Godement (c.f. chapter 4, \cite{Godement}), and we include them for the convenience of readers and rewrite to make them compatible with our argument.\\
\\
We have the following criterion using soft resolution on topological dimensions. 
\begin{theorem}
For a paracompact Hausdorff space, the following are equivalent:
\begin{itemize}
\item[(a)] The topological dimension of $X$ is $\leqslant n$. 
\item[(b)] The sheaf
$$\mathcal{Z}^n(\calf)=\ker\{\mathcal{C}^n(\calf)\to \mathcal{C}^{n+1}(\calf)\}$$
is soft for every sheaf $\calf$ over $X$.
\item[(c)] Every sheaf $\calf$ over $X$ admits a resolution of length $n$ of soft sheaves:
\begin{equation*}
0\to\calf\to \mathcal{L}^0\to\mathcal{L}^1\to\cdots\to \mathcal{L}^n \to 0.  
\end{equation*}
\end{itemize}
\end{theorem}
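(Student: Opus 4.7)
My plan is to establish the equivalence via the cycle $(c)\Rightarrow(a)\Rightarrow(b)\Rightarrow(c)$, with each step resting on standard homological features of paracompact Hausdorff spaces.

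The implication $(c)\Rightarrow(a)$ is the most direct: I would invoke the theorem that every soft sheaf on a paracompact Hausdorff space is $\Gamma(X,-)$-acyclic (Godement, Th\'eor\`eme 4.4.3), so any soft resolution computes sheaf cohomology; a soft resolution of length $n$ then forces $H^p(X,\mathcal{F})=0$ for $p>n$. For $(b)\Rightarrow(c)$, I would simply truncate the Godement canonical flabby resolution at level $n$,
$$0\to\mathcal{F}\to \mathcal{C}^0(\mathcal{F})\to\cdots\to \mathcal{C}^{n-1}(\mathcal{F})\to \mathcal{Z}^n(\mathcal{F})\to 0,$$
noting that each $\mathcal{C}^i(\mathcal{F})$ is flabby (hence soft) while $\mathcal{Z}^n(\mathcal{F})$ is soft by hypothesis (b).

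The main step is $(a)\Rightarrow(b)$. Chopping the Godement resolution of an arbitrary sheaf $\mathcal{F}$ into short exact sequences
$$0\to \mathcal{Z}^k(\mathcal{F})\to \mathcal{C}^k(\mathcal{F})\to \mathcal{Z}^{k+1}(\mathcal{F})\to 0,$$
and using that each flabby $\mathcal{C}^k(\mathcal{F})$ is $\Gamma$-acyclic, successive dimension shifts in the resulting long exact cohomology sequences yield
$$H^p\bigl(X,\mathcal{Z}^n(\mathcal{F})\bigr)\cong H^{p+n}(X,\mathcal{F})\quad (p\geqslant 1),$$
and the right-hand side vanishes by hypothesis (a). To upgrade this vanishing to softness I would apply Godement's criterion that a sheaf $\mathcal{G}$ on paracompact Hausdorff $X$ is soft iff the restriction $\Gamma(X,\mathcal{G})\to \Gamma(A,\mathcal{G}|_A)$ is surjective for every closed $A\subset X$. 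Writing $U=X\setminus A$ and letting $i\colon A\hookrightarrow X$, $j\colon U\hookrightarrow X$ be the inclusions, the short exact sequence $0\to j_!(\mathcal{G}|_U)\to \mathcal{G}\to i_*(\mathcal{G}|_A)\to 0$ reduces the surjectivity to $H^1\bigl(X,j_!(\mathcal{G}|_U)\bigr)=0$; re-running the dimension-shift argument with $\mathcal{F}$ replaced by $j_!(\mathcal{F}|_U)$ (to which (a) still applies) provides this vanishing.

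The hard part is precisely the last verification: passing from mere cohomology vanishing on $X$ to softness of $\mathcal{Z}^n(\mathcal{F})$. The delicate point is that the Godement construction does not tautologically commute with $j_!$, so one must re-apply the shift to $j_!(\mathcal{F}|_U)$ rather than simply restrict; paracompactness is essential throughout, both to guarantee flabby-implies-soft and to legitimize Godement's closed-extension characterization of softness. Once these pieces are in place, the cycle closes and all three conditions become equivalent.
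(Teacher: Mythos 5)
Your cycle of implications and the first two steps ((c)$\Rightarrow$(a) by soft-acyclicity of soft resolutions and (b)$\Rightarrow$(c) by truncating the Godement resolution) coincide with the paper's argument, as does the overall strategy for (a)$\Rightarrow$(b): reduce softness of $\mathcal{Z}^n(\calf)$ to the vanishing of $H^1(X,j_!j^{-1}\mathcal{Z}^n(\calf))$ via the sequence $0\to j_!j^{-1}\mathcal{G}\to\mathcal{G}\to i_*i^{-1}\mathcal{G}\to 0$. The gap sits exactly at the point you flag as delicate, and your proposed repair does not close it. Re-running the dimension shift ``with $\calf$ replaced by $j_!(\calf|_U)$'' most naturally means using the canonical resolution of $j_!j^{-1}\calf$ and its own cocycle sheaves; this yields $H^1\bigl(X,\mathcal{Z}^n(j_!j^{-1}\calf)\bigr)\cong H^{n+1}(X,j_!j^{-1}\calf)=0$, whereas what your surjectivity argument needs is $H^1\bigl(X,j_!j^{-1}\mathcal{Z}^n(\calf)\bigr)=0$. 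Since, as you yourself observe, the Godement construction does not commute with $j_!j^{-1}$, these are different sheaves, and the vanishing you obtain is for the wrong one.

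The fix --- and the paper's actual argument --- is to apply the exact functor $j_!j^{-1}$ to the canonical resolution of $\calf$ itself rather than to form the canonical resolution of $j_!j^{-1}\calf$. Because $j_!j^{-1}$ preserves exactness and softness, and flabby sheaves are soft on a paracompact Hausdorff space, the complex $0\to j_!j^{-1}\calf\to j_!j^{-1}\mathcal{C}^0(\calf)\to j_!j^{-1}\mathcal{C}^1(\calf)\to\cdots$ is again a soft (no longer canonical) resolution, and exactness of $j_!j^{-1}$ gives $\ker\bigl(j_!j^{-1}\mathcal{C}^n(\calf)\to j_!j^{-1}\mathcal{C}^{n+1}(\calf)\bigr)=j_!j^{-1}\mathcal{Z}^n(\calf)$ on the nose. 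Dimension shifting is valid along any acyclic resolution, so running it along \emph{this} one gives $H^1\bigl(X,j_!j^{-1}\mathcal{Z}^n(\calf)\bigr)\cong H^{n+1}(X,j_!j^{-1}\calf)=0$ by (a), which is precisely the vanishing required. With that substitution your proof is complete and agrees with the paper's.
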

\begin{proof}
(b) $\implies$ (c).\\
Recall that flabbiness implies softness. Take $\mathcal{L}^q=\mathcal{C}^q(\calf)$ for $q<n$ and $\mathcal{L}^n=\mathcal{Z}^n(\calf)$.\\
(c) $\implies$ (a).\\
For every sheaf $\calf$, the de-Rham-Weil isomorphism implies that 
$$H^q(X,\calf)=H^q(\mathcal{L}^{\bullet}(X))=0$$
when $q>n$.\\
(a) $\implies$ (b).\\
Let $\calf$ be a sheaf on $X$ and $S\xhookrightarrow{i}X$ be a closed subset. Let $U=X\backslash S$ and $j:U\hookrightarrow X$.\\
Denote
\[  (j_{!}j^{-1}\calf)_x=\left\{
\begin{array}{ll}
      \calf_x & \quad x\in U \\
      0 &\quad  \text{otherwise} \\
\end{array} 
\right. 
\]
Here $j_{!}$ is the operator given by the trial extension across closed subset.\\
Then the operator $j_{!}j^{-1}$ is given by first pull back $\calf$ over $X$ via $j$, then extend trivially to $X$. It preserves exactness and softness and since flabby sheaves are soft on paracompact Hausdorff spaces, this operator applied to a flabby resolution gives a soft resolution. Thus we have
$$0\to j_{!}j^{-1}\calf\to j_{!}j^{-1}\mathcal{C}^0(\calf)\to j_{!}j^{-1}\mathcal{C}^1(\calf)\to \cdots$$
is a soft resolution.\\
Also
$$\ker\big(j_{!}j^{-1}\mathcal{C}^n(\calf)\to j_{!}j^{-1}\mathcal{C}^{n+1}(\calf)\big)=j_{!}j^{-1}\mathcal{Z}^n(\calf).$$
By the usual de-Rham argument, we have
$$H^1(X,j_{!}j^{-1}\mathcal{Z}^n(\calf))\cong H^{n+1}(X,j_{!}j^{-1}\calf)=0.$$
The long exact sequence associated to
$$0\to j_{!}j^{-1}\mathcal{Z}^n(\calf)\to\mathcal{Z}^n(\calf)\to i_{*}i^{-1}\mathcal{Z}^n(\calf)\to 0$$
gives
$$\mathcal{Z}^n(\calf)(X)\to \mathcal{Z}^n(\calf)(S)=H^0(X,i_{*}i^{-1}\mathcal{Z}^n(\calf))\to H^1(X,j_{!}j^{-1}\mathcal{Z}^n(\calf))=0.$$
This shows that $\mathcal{Z}^n(\calf)$ is soft.
\end{proof}
We also have easy consequences of the above criterion.
\begin{theorem}\label{local}
Let X be a paracompact Hausdorff topological space.
\begin{enumerate}
\item if $S\subset X$, then the topological dimension of $S$ $\leqslant$ the topological dimension of $X$, providing either $S$ is closed or $X$ is metrizable.
\item if every point of $X$ has a neighborhood of topological dimension $\leqslant n$, then topological dimension of $X$ $\leqslant n$.
\end{enumerate}
\end{theorem}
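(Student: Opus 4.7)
The plan is to deduce both parts from the equivalent criteria established in the preceding theorem, using softness of the cocycle sheaves $\mathcal{Z}^n(\calf)$ as the central tool.

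For part (1) with $S$ closed, given a sheaf $\mathcal{G}$ on $S$ and the closed inclusion $i\colon S\hookrightarrow X$, I would push forward Godement's canonical flabby resolution of $\mathcal{G}$ along $i_*$. Because $i_*$ is exact for closed inclusions and sends flabby sheaves to flabby sheaves, the result is a flabby resolution of $i_*\mathcal{G}$ on $X$, and the de Rham--Weil isomorphism (already used in the proof of the preceding theorem) yields $H^p(S,\mathcal{G})=H^p(X,i_*\mathcal{G})=0$ for $p>n$. For the metrizable case, the subspace $S$ is itself metrizable and hence paracompact Hausdorff. I would apply criterion (c) to $i_*\mathcal{G}$ on $X$, obtaining a length-$n$ soft resolution $0\to i_*\mathcal{G}\to\mathcal{L}^0\to\cdots\to\mathcal{L}^n\to 0$, and then restrict to $S$. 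Restriction is exact, so the restricted sequence is a resolution of $\mathcal{G}$; the essential technical input is that in the metrizable setting softness is preserved under restriction to an arbitrary subspace, which relies on every subspace of a metrizable space being paracompact. Granted this, criterion (c) on $S$ forces $\dim S\leqslant n$.

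For part (2), I would verify criterion (b): for every sheaf $\calf$ on $X$, the sheaf $\mathcal{Z}^n(\calf)$ is soft on $X$. By hypothesis each point $x$ has a neighborhood $U_x$ of topological dimension $\leqslant n$; using regularity of paracompact Hausdorff spaces, one can shrink to a closed neighborhood $V_x\subset U_x$, and the closed case of part (1) gives $\dim V_x\leqslant n$. Criterion (b) applied on $V_x$ then shows $\mathcal{Z}^n(\calf)|_{V_x}$ is soft. To pass from local to global, I would invoke the standard local-global principle for softness on paracompact Hausdorff spaces --- a sheaf is soft provided every point admits a closed neighborhood on which it is soft --- which follows by shrinking a locally finite closed cover and gluing extensions with a partition of unity. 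Criterion (b) then concludes $\dim X\leqslant n$.

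The main obstacle is the metrizable half of part (1): the restriction of a soft sheaf to an arbitrary subspace is not soft in general, and controlling it requires the subspace itself to be paracompact, which is precisely what metrizability affords. Once this preservation statement is granted, the rest of the argument is a direct application of the criteria (b) and (c) from the preceding theorem, together with the partition-of-unity-based local-global principle used in part (2).
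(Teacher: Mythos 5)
Your proposal follows essentially the same route as the paper: part (1) for closed $S$ via the exact, flabbiness-preserving pushforward $i_*$ and the resulting isomorphism $H^p(S,\calf)\cong H^p(X,i_*\calf)$, and part (2) by verifying criterion (b) locally and invoking the fact that softness is a local property on a paracompact space. The only difference is that you sketch the metrizable half of part (1) (restricting a soft resolution to an arbitrary subspace, with softness preserved by hereditary paracompactness), which the paper simply delegates to Godement's Th\'{e}or\`{e}me 4.14.2.
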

\begin{proof}
1. We only prove for $S$ is closed. For the other case, see Th\'{e}or\`{e}me 4.14.2 \cite{Godement}.\\
Take $i: S\hookrightarrow X$ and $\calf$ be a sheaf on $S$. Then $i_{*}\calf$ is a sheaf over $X$ and 
$$H^p(S,\calf)\cong H^p(X,i_*\calf).$$
2. By the above criterion, we have $\mathcal{Z}^n(\calf)$ is locally soft. Because softness is a local property, we have $\mathcal{Z}^n(\calf)(X)$ is soft, thus the topological dimension of $X$ $\leqslant n$.
\end{proof}
Now we can prove the following theorem.
\begin{theorem}
Let $X$ be a paracompact topological manifold of real dimension $n$, and $\calf$ be a sheaf of abelian groups defined on $X$, then
\begin{equation}
H^p(X,\calf)=0\quad \text{when\quad } p>n.
\end{equation}
In other words, the topological dimension of a paracompact topological manifold is less than or equal to its real dimension in the usual sense.
\end{theorem}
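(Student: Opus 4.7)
The plan is to deduce the theorem by combining Theorem \ref{cubes_sheaf}, which gives the vanishing on a single closed cube, with the local-to-global criterion from Theorem \ref{local}(2), which says that topological dimension on a paracompact Hausdorff space is detected by neighborhoods of points. Since $X$ is a paracompact topological manifold it is, by the usual convention, Hausdorff, so both hypotheses of Theorem \ref{local}(2) are available.

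The first step is to produce a closed-cube neighborhood around each point. For any $x \in X$, pick a chart $\varphi \colon U \to V \subseteq \R^n$ with $\varphi(x)$ in the interior of $V$. Choose real numbers $a_i < \varphi(x)_i < b_i$ with the closed cube $I^n = \prod_{i=1}^n [a_i,b_i]$ contained in $V$; then $K := \varphi^{-1}(I^n) \subset U \subset X$ is a closed subset homeomorphic to a closed cube of real dimension $n$, and its interior (the image of the open cube under $\varphi^{-1}$) is an open neighborhood of $x$. Hence $K$ is a neighborhood of $x$ in the topological sense used by Theorem \ref{local}(2).

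Next I apply Theorem \ref{cubes_sheaf} to $K$ to see that $K$ itself has topological dimension $\leq n$ in the sense of Definition \ref{dimension}: for any sheaf $\calg$ of abelian groups on $K$ and any $p > n$, the vanishing $H^p(K,\calg) = 0$ follows from the cube case. With this, every point of $X$ admits a neighborhood ($K$) of topological dimension $\leq n$, so Theorem \ref{local}(2) yields that $X$ itself has topological dimension $\leq n$, which is exactly the desired statement $H^p(X,\calf) = 0$ for all $p > n$ and all sheaves $\calf$ of abelian groups on $X$.

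The one delicate bookkeeping point, which I expect to be the main obstacle, is reconciling the hypothesis of Theorem \ref{cubes_sheaf} (which speaks of sheaves defined in a neighborhood of a closed cube sitting inside $\R^n$) with the intrinsic definition of topological dimension, which ranges over all sheaves on $K$ itself. This is resolved by extending an arbitrary sheaf $\calg$ on $K$ to the ambient chart via the direct image $i_{*}\calg$ along the closed inclusion $i \colon K \hookrightarrow U$: one has $H^p(K, \calg) \cong H^p(U, i_{*}\calg)$ together with Godement's Th\'eor\`eme 4.11.1 expressing the latter as a direct limit of cohomology over neighborhoods of $K$, so Theorem \ref{cubes_sheaf} applies and gives the required vanishing. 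This is the only genuine subtlety; everything else is a direct invocation of the previously established results.
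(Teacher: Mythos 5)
Your proposal is correct and follows essentially the same route as the paper: take a closed-cube neighborhood of each point, apply Theorem \ref{cubes_sheaf} to see it has topological dimension $\leqslant n$, and conclude via Theorem \ref{local}(2). The only difference is that you spell out the (valid) bookkeeping step of transporting an arbitrary sheaf on the cube to the setting of Theorem \ref{cubes_sheaf} via $i_{*}$, a detail the paper leaves implicit.
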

\begin{proof}
For every point of $X$, it has a closed cube (with nonempty interior) of dimension $n$ as a neighborhood, which has topological dimension $\leqslant n$ due to Theorem \ref{cubes_sheaf}. By Theorem \ref{local}.2, we know the topological dimension of $X$ is also $\leqslant n$, which is equivalent to the required vanishing property of sheaf cohomology.
\end{proof}
\begin{remark}
This argument goes through as long as we can find a closed cube neighborhood of dimension less than or equal to $n$ for any given point in a topological space $X$. The case when $X$ is a paracompact topological manifold of dimension $n$ is only a special case.
\end{remark}
\subsection{Vanishing of cohomology for locally constant sheaves on closed cubes}\label{constant_sheaf_application}
If the sheaf defined on a closed cube is (locally) constant, we can furthermore use Cartan's method to show that any sheaf cohomology of positive degree vanishes. Precisely, we will show:
\begin{theorem}
Let $\mathcal{A}$ be a locally constant sheaf of abelian groups $A$ defined in a neighborhood of a closed cube $I^n$, then
\begin{equation}
H^p(I^n,\mathcal{A})=0    
\end{equation}
for any $p > 0$.
\end{theorem}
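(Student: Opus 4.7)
The plan is to mirror the induction structure used in Theorems 1.2 and 2.2, carrying out induction on the cube dimension $n$. The base case $n=0$ is immediate: on a point the Godement canonical flabby resolution is trivial, so $H^p(I^0, \mathcal{A}) = 0$ for $p > 0$. For the inductive step, project $\pi = \pi_1 : I^n \to [a_1, b_1]$ onto the first coordinate. Since each fibre $\pi^{-1}(t)$ is a closed cube of dimension $n-1$ and $\mathcal{A}|_{\pi^{-1}(t)}$ remains a locally constant sheaf with stalk $A$, the inductive hypothesis delivers $H^p(\pi^{-1}(t), \mathcal{A}|_{\pi^{-1}(t)}) = 0$ for \emph{all} $p > 0$ (not merely for $p > n-1$ as in Theorem 2.2).

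For $p \geqslant 2$ the argument proceeds verbatim as in Case 1 of Theorem 1.2: represent a cohomology class by $\eta \in \mathcal{C}^p(\mathcal{A})(I^n)$ with $d^p\eta = 0$, extract a partition $a_1 = t_0 < t_1 < \cdots < t_M = b_1$ and local primitives $\alpha_j \in \mathcal{C}^{p-1}(\mathcal{A})(I^n)$ satisfying $\eta|_{\pi^{-1}([t_{j-1},t_j])} = d^{p-1}\alpha_j$, observe that $d^{p-1}(\alpha_j - \alpha_{j-1}) = 0$ on $\pi^{-1}(t_{j-1})$, invoke the inductive hypothesis to write $\alpha_j - \alpha_{j-1} = d^{p-2}\beta_{j-1}$ on a neighborhood, extend $\beta_{j-1}$ globally using flabbiness of $\mathcal{C}^{p-2}(\mathcal{A})$, and glue from left to right to obtain $\alpha \in \mathcal{C}^{p-1}(\mathcal{A})(I^n)$ with $d^{p-1}\alpha = \eta$. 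No new ingredient is required.

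The $p=1$ case is where Cartan's appeal to Cauchy's integral formula must be replaced. Produce local primitives $\alpha_j \in \mathcal{C}^0(\mathcal{A})(I^n)$ with $\eta = d^0\alpha_j$ on $\pi^{-1}([t_{j-1},t_j])$; the difference $f_j := \alpha_j - \alpha_{j-1}$ is $d^0$-closed on $\pi^{-1}(t_{j-1})$, so it lies in $\mathcal{A}(\pi^{-1}(t_{j-1}))$ via $\imath$. The key observation is that $\pi^{-1}(t_{j-1})$ is a closed cube, hence simply connected, so $\mathcal{A}|_{\pi^{-1}(t_{j-1})}$ is in fact constant and $f_j$ is determined by a single element $a_j \in A$. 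The surrounding slab $\pi^{-1}([t_{j-2},t_{j-1}])$ is likewise simply connected, so we may take $h_{j-1}$ to be the constant section of $\mathcal{A}$ with value $a_j$ on that slab and $h_j = 0$ on $\pi^{-1}([t_{j-1},t_j])$. Then $\alpha_{j-1} + h_{j-1}$ and $\alpha_j + h_j$ agree on $\pi^{-1}(t_{j-1})$ and glue to a primitive $\alpha'$ on $\pi^{-1}([t_{j-2},t_j])$. Iterating $M-1$ times yields the required $\alpha \in \mathcal{C}^0(\mathcal{A})(I^n)$ with $d^0\alpha = \eta$.

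The main point is that local constancy, combined with the contractibility of every subcube appearing in the slicing, collapses Cartan's Cauchy-type decomposition into the trivial splitting $a_j = a_j - 0$; aside from this replacement the argument is a direct transplant of the proofs of Theorems 1.2 and 2.2. The only step one must double-check is that the choices made (picking a value $a_j \in A$ through a trivialization, and extending across the slab) are canonical enough that the induction closes, which follows at once from the simple-connectedness of closed cubes and the existence of a global trivialization of $\mathcal{A}$ on each of them.
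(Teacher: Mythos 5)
Your proposal is correct and follows essentially the same route as the paper: induction on $n$ with the slicing argument, the $p\geqslant 2$ case carried over verbatim from the topological-dimension theorem, and for $p=1$ the observation that the $d^0$-closed difference $\alpha_j-\alpha_{j-1}$ is a constant section with value in $A$ on the interface cube, which extends to the adjacent slab and permits gluing. Your explicit appeal to the simple-connectedness of the subcubes to trivialize the locally constant sheaf is a justification the paper leaves implicit, but the construction is otherwise identical.
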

\begin{proof}
We use induction on $n$.\\
Consider the following flabby resolution:
$$0\to\mathcal{A}\to \mathcal{C}^0(\mathcal{A})\to\mathcal{C}^1(\mathcal{A})\to\mathcal{C}^2(\mathcal{A})\to\cdots\to\mathcal{C}^p(\mathcal{A})\to\mathcal{C}^{p+1}(\mathcal{A})\to\cdots$$
and its corresponding sections on $I^n$:
\begin{equation*}
\begin{split}
0&\to\mathcal{A}(I^n)\xrightarrow{\imath} \mathcal{C}^0(\mathcal{A})(I^n)\xrightarrow{d^0} \mathcal{C}^{1}(\mathcal{A})(I^n)\to\cdots\\
\cdots&\to\mathcal{C}^{p-1}(\mathcal{A})(I^n)\xrightarrow{d^{p-1}} \mathcal{C}^{p}(\mathcal{A})(I^n) \xrightarrow{d^p} \mathcal{C}^{p+1}(\mathcal{A})(I^n)\to\cdots\end{split}
\end{equation*}
When $n=0$, $I^0$ is simply a point and the result is true as before.\\
Assume that it is true for $n-1$.\\
\textbf{Case 1:} if $p\geqslant 2$, take $\eta\in\mathcal{C}^p(\mathcal{A})(I^n)$ with $d^p \eta=0$. The proof is analogous to that of theorem \ref{cubes_sheaf}, which we will not repeat here. Also, we use the same notation as in section \ref{topological_dimension} (with $\pi=\pi_1$).\\
\textbf{Case 2:} if $p=1$, take $\eta\in \mathcal{C}^1(\mathcal{A})(I^n)$ with $d^1 \eta=0$. For any $t\in [a_1,b_1]$, $\eta\big|_{\pi^{-1}(t)}$ is exact by induction hypothesis. We can similarly get a partition of $[a_1,b_1]$ by closed intervals, say $a_1=t_0<t_1<\cdots<t_j<\cdots<t_{M-1}<t_M=b_1$ such that
$$\eta\big|_{\pi^{-1}([t_{j-1},t_j])}=d^0\alpha_{j}$$
for some $\alpha_{j}\in\mathcal{C}^0(\mathcal{A})\big(\pi^{-1}([t_{j-1},t_j])\big)$ where $1\leqslant j\leqslant M$.\\
We can assume that $\alpha_{j}\in\mathcal{C}^0(\mathcal{A})(I^n)$ by the flabbiness of $\mathcal{C}^0(\mathcal{A})$.\\
On $\pi^{-1}(t_{j})$, we have
$$d^{0}\Big((\alpha_{j}-\alpha_{j-1})\big|_{\pi^{-1}(t_{j-1})}\Big)=\eta\big|_{\pi^{-1}(t_{j-1})}-\eta\big|_{\pi^{-1}(t_{j-1})}=0.$$
Therefore, if we identify the constant sheaf $\mathcal{A}$ with its image in $\mathcal{C}^0(\mathcal{A})$,
$$\alpha_{j}-\alpha_{j-1}\in \mathcal{A}(\pi^{-1}(t_{j-1}))$$
is defined on a neighborhood $W_{j-1}$ of $\pi^{-1}(t_{j-1})$.\\
Assume that on $W_{j-1}$
$$\alpha_{j}-\alpha_{j-1}=b_j\in A$$
and define
$$\alpha'=\begin{cases}
\alpha_{j-1}+b_j &\text{\quad on\ }\pi^{-1}([t_{j-2},t_{j-1}])\\
\alpha_j &\text{\quad on\ }\pi^{-1}([t_{j-1},t_{j}])
\end{cases}
$$
Then we get an $\alpha'\in \mathcal{C}^0(\mathcal{A})(\pi^{-1}([t_{j-2},t_j]))$ with $d^0\alpha'=\eta\big|_{\pi^{-1}([t_{j-2},t_j])}$. By making modification for $M-1$ times, we get $\alpha\in \mathcal{C}^{0}(\mathcal{A})(I^n)$ with $d^{0}\alpha=\eta$ on $I^n$. This proves that $H^1(I^n,\mathcal{A})=0$.\\
By induction, we know that $H^p(I^n,\mathcal{A})=0$ for any $n$ and $p> 0$.
\end{proof}
In particular, we know that on closed cubes, $H^p(I^n,\underline{\Z})=H^p(I^n,\underline{\R})=H^p(I^n,\underline{\C})=0$ when $p>0$.
\bibliographystyle{alpha}
\bibliography{main}
\end{document}